\documentclass[a4paper]{article}

\usepackage{amsmath, amsthm, amssymb, mathrsfs}
\usepackage{graphicx, tikz}
\usepackage{hyperref}

\newtheorem{theorem}{Theorem}[section]
\newtheorem{definition}{Definition}[section]
\newtheorem{problem}{Problem}[section]

\newtheorem{remark}{Remark}[section]

%=============================================================

\begin{document}

\title{Uniqueness in inverse cavity scattering problems with phaseless near-field data}

\author{
Deyue Zhang\thanks{School of Mathematics, Jilin University, Changchun, China. {\it dyzhang@jlu.edu.cn}}, 
Yinglin Wang\thanks{School of Mathematics, Jilin University, Changchun, China. {\it wyl15022015166@163.com}}, 
Yukun Guo\thanks{School of Mathematics, Harbin Institute of Technology, Harbin, China. {\it ykguo@hit.edu.cn} (Corresponding author)}\ \ and 
Jingzhi Li\thanks{Department of Mathematics, Southern University of Science and Technology, Shenzhen, China. {\it li.jz@sustech.edu.cn}}
}
\date{}

\maketitle

\begin{abstract}
This paper is concerned with the uniqueness of inverse acoustic scattering problem for cavities with the modulus of the near-fields. With the aid of the reference ball technique and the superpositions of two point sources as the incident waves, we rigorously prove that the location and shape of the cavity as well as its boundary condition can be uniquely determined by the modulus of near-fields at an admissible surface. To our knowledge, this is the first uniqueness result in inverse cavity scattering problems with phaseless near-field data. In this paper, we make use of the phaseless near-field data incurred by the cavity and the point sources, and thus the configuration is more feasible in practice.
\end{abstract}

\noindent{\it Keywords}: uniqueness, inverse scattering, phaseless near field, cavity, reference ball

%=============================================================

\section{Introduction}

The inverse obstacle scattering problems are of significant importance in diverse areas of sciences and technology such as non-destructive testing, radar sensing, sonar detection and biomedical imaging (see, e.g. \cite{Colton}), which are typical exterior inverse scattering problems. However, the interior inverse scattering problems for determining the shape of cavities arise in many practical applications of radar sensing and non-destructive testing (see, e.g. \cite{Jakubik, Qin1}). In contrast to the typical exterior inverse scattering problems, the interior inverse scattering problems are more complicated to some extent due to the repeated reflections of the scattered waves, and some mathematical
studies have been made. In \cite{Qin1, Zeng, Liu14}, the uniqueness of the inverse cavity scattering with the Dirichlet boundary condition has been established. For the the impedance boundary condition and the mixed boundary condition, the uniqueness results have been  given in \cite{Qin2, Liu14} and \cite{Hu}, respectively. In \cite{Qin}, the authors proposed the method of adding an artificial obstacle to avoid the interior eigenvalues and gave a new proof for the uniqueness of the inverse problems. There have also been some numerical reconstruction algorithms for solving the inverse cavity problems. We refer to \cite{Qin1, Qin2, Hu, Zeng1, Qin0, Zeng, Liu14, Sun} for the linear sampling method, the regularized Newton iterative method, the decomposition method, the factorization method and the reciprocity gap functional method.

The above theories and numerical methods are based on the full data (both the intensity and phase). However, in many situations, one can measure only the intensity/magnitude of the data, which leads to the study of inverse scattering problems with phaseless or intensity-only data.

The exterior inverse scattering problems with phaseless near-field data have been studied numerically (see, e.g. \cite{Candes, Candes1, Caorsi, CH17, CFH17, Maleki, Maleki1, Pan, Takenaka}), and few results have been done on the theory of uniqueness for the inverse scattering problems. A recent result on uniqueness in \cite{Kli14} was related to the reconstruction of a potential with the phaseless near-field data for point sources on a spherical surface and an interval of wave-numbers, which was extended in \cite{Kli17} for determining the wave speed in generalized 3-D Helmholtz equation. The uniqueness of a coefficient inverse scattering problem with phaseless near-field data has been established in \cite{KR17}. We also refer to \cite{KR16, Novikov15, Novikov16} for some recovery algorithms for the inverse medium scattering problems with phaseless near-field data. The stability analysis for linearized near-field phase retrieval in X-ray phase contrast imaging can be found in \cite{Maretzke}.

For exterior inverse scattering problems with phaseless far-field data, several uniqueness results have been established. With a priori information, uniqueness on determining the radius of a sound-soft ball was given in \cite{LZ09}. A method of superposition of incident waves was proposed in \cite{ZhangBo20171}, which led to the multi-frequency Newton iteration algorithm \cite{ZhangBo20171, ZhangBo20172} and the fast imaging algorithm \cite{ZZ18}. Moreover, uniqueness results were established in \cite{XZZ18a} under some a priori assumptions. Recently, the idea of resorting to the reference ball technique (see, e.g. \cite{Colton1, Colton2, Li}) in phaseless inverse scattering problems was  proposed by Zhang and Guo in \cite{ZhangDeyue20181}, and the uniqueness results were established by utilizing the reference ball technique in conjunction with the superposition of incident waves. With the aid of the reference ball technique, the a priori assumptions in \cite{XZZ18a} can be removed, see \cite{XZZ18b} for the details. Similar strategies of adding reference objects or sources to the scattering system for different models of phaseless inverse scattering problems can be found in \cite{DZG19, DLL18, JL18, JLZ18a, JLZ18b, ZhangDeyue20182}. For the numerical algorithms for the shape reconstruction from phaseless data, we refer to \cite{BLL2013, Bao2016, CH17, Ivanyshyn1, Ivanyshyn2, Ivanyshyn3, KR16, Kress, Lee2016, Li1, LLW17, Shin}.

In this paper, we consider the incident point sources and deal with the uniqueness issue concerning the inverse cavity scattering problems with phaseless total field data.
We rigorously prove in this paper that the location and shape of the obstacle as well as its boundary condition can be uniquely determined by the modulus of total fields at an admissible surface. To the best of our knowledge, this is a first uniqueness result in inverse cavity scattering problems with phaseless near-field data.  The main idea here is the utilization of the reference ball technique, superpositions of point sources, the reciprocity relations and the singularity of the total fields. We emphasize that the reference ball technique should be necessary for the phaseless inverse scattering problems for cavities owing to lack of the far-field pattern, and the reference ball can provide some information on the location of the cavity in devising effective numerical inversion schemes in comparison with the exterior inverse scattering problems (see, e.g. \cite{DZG19}), which will be our future work.

The rest of this paper is arranged as follows. In the next section, we present an introduction to the model problem. Section \ref{sec:obstacle} is devoted to the uniqueness results on phaseless inverse cavity scattering problem.

%Some concluding remarks are given in Section \ref{sec:conclusion}.

%%%%%%%%%%%%%%%%%%%%%%%%%%%%%%%%%%%%%%%%%%%%%%%%%%%%%%%%%%%%%%%%%%%%%%%%%%%%%%%%%%%%%%%%%%%
\section{Problem setting}\label{sec:problem_setup}
%%%%%%%%%%%%%%%%%%%%%%%%%%%%%%%%%%%%%%%%%%%%%%%%%%%%%%%%%%%%%%%%%%%%%%%%%%%%%%%%%%%%%%%%%%%

We begin this section with the precise formulations of the model cavity scattering problem. Assume $D \subset\mathbb{R}^3$ is an open and simply connected domain with
$C^2$ boundary $\partial D$.  Denote by $u^i$ the incident field. Then, the interior scattering problem for cavities can be formulated as: to find the scattered field $u^s$ which satisfies the following boundary value problem:
\begin{align}
\Delta u^s+ k^2 u^s= & 0 \quad \mathrm{in}\ D,\label{eq:Helmholtz} \\
\mathscr{B}u= & 0 \quad \mathrm{on}\ \partial D, \label{eq:boundary_condition}
\end{align}
where $u=u^i+u^s$ denotes the total field and $k>0$ is the wavenumber. Here $\mathscr{B}$ in \eqref{eq:boundary_condition} is the boundary operator defined by
\begin{equation}\label{BC}
\begin{cases}
\mathscr{B}u=u, & \text{for a sound-soft cavity},  \\
\mathscr{B}u=\dfrac{\partial u}{\partial \nu}+ \lambda u, & \text{for an impedance cavity},
\end{cases}
\end{equation}
where $\nu$ is the unit outward normal to $\partial D$, and $\lambda\in C(\partial D)$ is the impedance function satisfying $\Im(\lambda)\geq 0$. This boundary condition \eqref{BC} covers the Dirichlet/sound-soft boundary condition, the Neumann/sound-hard boundary condition ($\lambda=0$), and the impedance boundary condition ($\lambda\neq 0$). The existence of a solution to the direct scattering problem \eqref{eq:Helmholtz}--\eqref{eq:boundary_condition} is well known (see, e.g. \cite{Cakoni1, Colton3, Colton}).

Now, we turn to introducing the  interior inverse scattering problem for incident point sources with limited-aperture phaseless near-field data. To this end, we first introduce a reference ball $B$ as an extra artificial object to the scattering system such that $\overline{B}\subset\subset D$ with the impedance boundary condition
\begin{eqnarray}\label{Referenceball}
\displaystyle\frac{\partial u}{\partial \nu}+ i\lambda_0  u=0\quad \mathrm{on}\ \partial B,
\end{eqnarray}
where $\lambda_0$ is a positive constant, and the following definition of admissible surfaces.
\begin{definition}[Admissible surface]
An open surface $\Gamma$ is called an admissible surface with
respect to domain $\Omega$ if

\noindent (i) $\Omega\subset\mathbb{R}^3\backslash\overline{D}$ is bounded and simply-connected;

\noindent (ii) $\partial \Omega$ is analytic homeomorphic to $\mathbb{S}^2$;

\noindent (iii) $k^2$ is not a Dirichlet eigenvalue of $-\Delta$ in $\Omega$;

\noindent (iv) $\Gamma\subset\partial\Omega$ is a two-dimensional analytic manifold with nonvanishing measure.
\end{definition}

\begin{remark}
The artificial obstacle with impedance boundary condition \eqref{Referenceball} can also be founded in \cite{Qin} to remove the interior eigenvalues for the direct scattering problems and the reference ball technique has been used in \cite{Li, Colton1, Colton2} for the exterior inverse scattering problems.
\end{remark}
\begin{remark}
    We would like to point out that this requirement for the admissibility of $\Gamma$ is quite mild and thus can be easily fulfilled. For instance, $\Omega$ can be chosen as a ball whose radius is less than $\pi/k$ and $\Gamma$ is chosen as an arbitrary corresponding semisphere.
\end{remark}

For a generic point $z\in D\backslash\overline{B}$, the incident field $u^i$ due to the point source located at $z$ is given by
\begin{equation*}
u^i (x, z):=\frac{\mathrm{e}^{\mathrm{i} k|x-z|}}{4\pi |x-z|}, \quad x\in D\backslash(\overline{B}\cup\{z\}),
\end{equation*}
which is also known as the fundamental solution to the Helmholtz equation. Denote by $u^s(x,z)$  the near-field generated by $D$ and $B$ corresponding to the incident field $u^i(x, z)$. Let $u(x,z)=u^s(x,z)+u^i(x, z)$, $x\in D\backslash(\overline{B}\cup\{z\})$ be the total field.

For two generic and distinct source points $z_1, z_2\in D\backslash\overline{B}$, we denote by
\begin{equation}\label{incident}
 u^i(x; z_1,z_2):=u^i(x, z_1)+u^i(x, z_2),\quad x\in D\backslash(\overline{B}\cup\{z_1\}\cup\{z_2\}),
\end{equation}
the superposition of these point sources. Then, by the linearity of direct scattering problem, the near-field co-produced by $D$, $B$ and the incident wave $u^i(x; z_1,z_2)$ is given by
$$
u(x;z_1,z_2):=u(x,z_1)+u(x,z_2), \quad x\in D\backslash(\overline{B}\cup\{z_1\}\cup\{z_2\}).
$$

With these preparations,  we formulate the phaseless inverse scattering problems as the following.

\begin{problem}[Phaseless inverse scattering]\label{prob:obstacle}
Let $D$ be the impenetrable cavity with boundary condition $\mathscr{B}$. Assume that $\Gamma$ and $\Sigma$ are admissible surfaces with
respect to $\Omega$ and $G$, respectively, such that
$\overline{\Omega}\subset\subset G$ and $\overline {G}\subset\subset D\backslash\overline{B}$.  Given the phaseless
near-field data
 \begin{equation*}
 \begin{array}{ll}
 & \{|u(x,z_0)|: x\in \Sigma\}, \\
 & \{|u(x,z)|:  x\in \Sigma,\ z\in \Gamma\}, \\
 & \{|u(x,z_0)+u(x,z)|: x\in \Sigma,\ z\in \Gamma\}
 \end{array}
 \end{equation*}
 for a fixed wavenumber $k>0$ and a fixed  $z_0\in D\backslash(\overline{B}\cup\Gamma\cup\Sigma)$, determine the location and shape $\partial D$ as well as the boundary condition
 $\mathscr{B}$ for the cavity.
\end{problem}

We refer to Figure \ref{fig:illustration} for an illustration of the geometry setting of Problem \ref{prob:obstacle}. The uniqueness of this problem will be analyzed in the next section.

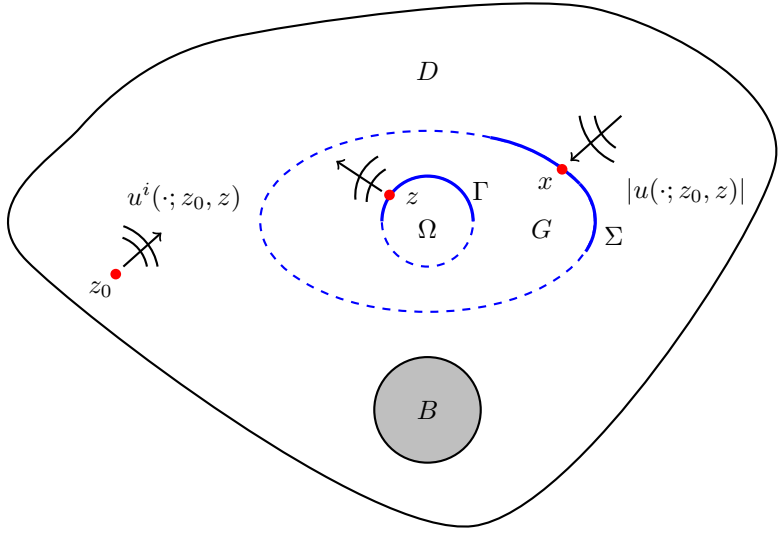
\begin{figure}
	\centering
	\newdimen\R % Radius of regular polygons
	\R=0.5cm
	\begin{tikzpicture}[thick]
	\pgfmathsetseed{8}
	\draw plot [smooth cycle, samples=6, domain={1:8},  xshift=0cm, yshift=4cm] (\x*360/8+5*rnd:0.5cm+7cm*rnd) node at (1.5, 5.5) {$D$};
	
	\draw (1.5, 1) circle (0.7cm) [fill=lightgray] node at (1.5,1) {$B$};% plot the reference ball
	\draw [blue, dashed] (1.5, 3.5) ellipse [x radius=2.2cm, y radius=1.2cm] ;
	
	\draw [blue, dashed] (1.5, 3.5) circle (0.6cm); % plot the reference ball
	\draw node at (1.5,3.4) {$\Omega$};
	\draw (2.1, 3.5) arc(0:180:0.6cm) [very thick, blue];
	\draw node at (2.2,3.9) {$\Gamma$};
	\draw node at (1.3,3.8) {$z$};
	\fill [red] (1.,3.85) circle (2pt);  % plot the point source $z$
	\draw [->] (0.9, 3.9)--(0.3, 4.3);
	\draw (0.95,4.19) arc(120:180:0.5cm);
	\draw (0.85,4.36) arc(120:185:0.6cm);
	
	\fill [red] (-2.6,2.8) circle (2pt);  % plot the point source $z_0$
	\draw (-2.8, 2.6) node   {$z_0$};
	\draw [->](-2.5,2.9)--(-2,3.35);
	\draw (-2.19, 2.9) arc(10:70:0.5cm);
	\draw (-2.05, 2.9) arc(10:70:0.7cm);
	\draw (-1.7, 3.5) node [above] {$u^i(\cdot; z_0, z)$};
	
	\draw [->](4.05, 4.9)-- (3.38,4.3);
	\draw (3.58, 3.1) arc(-35:45:0.7cm) [very thick, blue];
	\draw (3.5, 4) arc(45:80:2.2cm) [very thick, blue];
	\draw node at (3.05,4.) {$x$};
	\fill [red] (3.27,4.19) circle (2pt);  % plot the point  $x$
	
	\draw node at (3.,3.4) {$G$};
	\draw node at (3.95,3.3) {$\Sigma$};
	\draw (3.7, 4.9) arc(188:238:0.6cm);
	\draw (3.5, 4.9) arc(188:245:0.8cm);
	
	\draw node at (4.9,3.9) {$|u(\cdot; z_0, z)|$}; % near-field data
	\end{tikzpicture}
	\caption{An illustration of the phaseless inverse scattering problem.} \label{fig:illustration}
\end{figure}

%%%%%%%%%%%%%%%%%%%%%%%%%%%%%%%%%%%%%%%%%%%%%%%%%
\section{Uniqueness for the phaseless inverse scattering}\label{sec:obstacle}
%%%%%%%%%%%%%%%%%%%%%%%%%%%%%%%%%%%%%%%%%%%%%%%%%

Now we present the uniqueness results on phaseless inverse scattering. The following theorem shows that Problem \ref{prob:obstacle} admits a unique solution, namely, the geometrical and physical information of the scatterer boundary can be simultaneously and uniquely determined from the modulus of near-fields.

\begin{theorem}\label{Thm1}%-------------------------------------------------------------------------
 Let $D_1$ and $D_2 $ be two impenetrable cavities with boundary conditions $\mathscr{B}_1$ and $\mathscr{B}_2$, respectively.  Assume that $\Gamma$ and $\Sigma$ are admissible surfaces with respect to $\Omega$ and $G$, respectively, such that
$\overline{\Omega}\subset\subset G$ and $\overline {G}\subset\subset (D_1\cap D_2)\backslash\overline{B}$. Suppose that the corresponding near-fields satisfy
that
   \begin{align}
   |u_1(x,z_0)|= & |u_2(x,z_0)|, \quad \forall x  \in \Sigma, \label{obstacle_condition1} \\
   |u_1(x,z)|= & |u_2(x,z)|, \quad \forall (x, z) \in \Sigma\times\Gamma \label{obstacle_condition2}
   \end{align}
   and
   \begin{eqnarray}\label{obstacle_condition3}
   |u_1(x,z_0)+u_1(x,z)|=|u_2(x,z_0)+u_2(x,z)|,\quad \forall (x, z) \in \Sigma\times\Gamma
   \end{eqnarray}
   for an arbitrarily fixed $z_0\in (D_1\cap D_2)  \backslash(\overline{B}\cup\Gamma\cup\Sigma)$. Then we have
 $D_1=D_2$ and $\mathscr{B}_1=\mathscr{B}_2$.
\end{theorem}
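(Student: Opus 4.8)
The plan is to recover the full (phased) near-field from the three families of moduli and then reduce to a phased interior uniqueness argument. First I would expand the superposition modulus: since
\[
|u_j(x,z_0)+u_j(x,z)|^2 = |u_j(x,z_0)|^2 + |u_j(x,z)|^2 + 2\Re\!\left(u_j(x,z_0)\overline{u_j(x,z)}\right),
\]
the hypotheses \eqref{obstacle_condition1}--\eqref{obstacle_condition3} immediately yield
\[
\Re\!\left(u_1(x,z_0)\overline{u_1(x,z)}\right) = \Re\!\left(u_2(x,z_0)\overline{u_2(x,z)}\right), \qquad (x,z)\in\Sigma\times\Gamma.
\]
Writing $p_j(x,z):=u_j(x,z_0)\overline{u_j(x,z)}$, condition \eqref{obstacle_condition2} with \eqref{obstacle_condition1} gives $|p_1|=|p_2|$ while the identity above gives $\Re p_1=\Re p_2$; hence at each point either $p_1=p_2$ or $p_1=\overline{p_2}$.

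The next step is to promote this pointwise dichotomy to a global one. Since $u_j(x,z_0)$ is real-analytic in $x$ and $u_j(x,z)$ is real-analytic in $(x,z)$ on the analytic manifolds $\Sigma,\Gamma$, the functions $\Im(p_1\mp p_2)$ are real-analytic on the connected set $\Sigma\times\Gamma$ and their product vanishes identically; as the zero set of a nontrivial real-analytic function is nowhere dense, one factor must vanish throughout, so either $p_1\equiv p_2$ or $p_1\equiv\overline{p_2}$ on $\Sigma\times\Gamma$. A device I would use repeatedly is the following continuation: for fixed $x\in\Sigma$, an identity $u_2(x,\cdot)=\mu\,\Phi(x,\cdot)$ on $\Gamma$, with $\mu$ constant in $z$ and $\Phi$ a Helmholtz solution in $\Omega$, extends first to all of $\partial\Omega$, because both sides are real-analytic on the analytic connected surface $\partial\Omega\cong\mathbb{S}^2$ and agree on the open piece $\Gamma$, and then into $\Omega$ and across the common region by the admissibility assumption that $k^2$ is not a Dirichlet eigenvalue of $\Omega$, followed by unique continuation.

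In the case $p_1\equiv p_2$ I would divide to get $u_2(x,z)/u_1(x,z)=\gamma(x)$ independent of $z$ with $|\gamma|=1$; the continuation makes this valid for $z$ throughout the common region, so letting $z\to x$ and using that $u_1,u_2$ share the incident singularity $\mathrm e^{\mathrm ik|x-z|}/(4\pi|x-z|)$ forces $\gamma(x)\equiv1$, i.e. $u_1=u_2$. The hard part is excluding the conjugate case $p_1\equiv\overline{p_2}$. Here the same continuation yields $u_2(x,z)=c(x)\overline{u_1(x,z)}$ with $|c|=1$ throughout the common region, noting that $\overline{u_1(x,\cdot)}$ also solves the Helmholtz equation since $k$ is real. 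I would then invoke the reciprocity relation $u_j(x,z)=u_j(z,x)$ and read off the boundary behaviour on the reference ball: as a function of $z$, $u_2(x,\cdot)$ satisfies $\partial_\nu u+\mathrm i\lambda_0 u=0$ on $\partial B$, whereas $\overline{u_1(x,\cdot)}$ satisfies the conjugate condition $\partial_\nu u-\mathrm i\lambda_0 u=0$. Substituting $u_2=c\,\overline{u_1}$ into both forces $\lambda_0 u_2=0$, hence zero Cauchy data for $u_2(x,\cdot)$ on $\partial B$; by Holmgren/unique continuation $u_2(x,\cdot)\equiv0$, contradicting its singularity at $z=x$. This is exactly where the complex impedance condition on the reference ball (with $\lambda_0>0$) is indispensable, as it destroys the conjugation symmetry.

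Once $u_1(x,z)=u_2(x,z)$ is established for $(x,z)\in\Sigma\times\Gamma$, the continuation propagates equality of the two total fields throughout $(D_1\cap D_2)\setminus\overline{B}$, reducing matters to the classical phased setting. To conclude $D_1=D_2$ I would argue by contradiction: if the cavities differ, a source point may be driven to a boundary point of one cavity lying in the interior of the other, where one field stays regular while the other inherits the singularity dictated by its boundary condition, a contradiction of the Kirsch--Kress/Isakov type used in the phased cavity uniqueness results cited above. With $D_1=D_2=:D$, the coincidence of the rich family of total fields and their normal derivatives on $\partial D$ determines the Cauchy data and forces $\mathscr{B}_1=\mathscr{B}_2$, in particular equal impedance functions, completing the proof. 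I expect the exclusion of the complex-conjugate alternative to be the principal obstacle, with the reference-ball impedance condition and reciprocity as the decisive tools.
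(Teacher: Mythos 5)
Your proposal is correct and follows essentially the same route as the paper's proof: the polarization identity for the superposed moduli, the dichotomy $p_1=p_2$ versus $p_1=\overline{p_2}$, reciprocity plus analytic continuation through $\Omega$ (using that $k^2$ is not a Dirichlet eigenvalue) to propagate the identity into the common region, the point-source singularity to force the unimodular factor to equal $1$, the complex impedance condition on the reference ball to exclude the conjugate branch, and the classical phased cavity uniqueness results to finish. If anything, your real-analyticity argument for upgrading the pointwise dichotomy to a global one on $\Sigma\times\Gamma$, and your explicit zero-Cauchy-data/Holmgren step in the conjugate case, are slightly more careful than the corresponding passages in the paper.
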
%-------------------------------------------------------------------------------------------
\begin{proof}
    From  \eqref{obstacle_condition1}, \eqref{obstacle_condition2} and \eqref{obstacle_condition3}, we have for all $x\in\Sigma, z\in\Gamma$
    \begin{equation}\label{Thm1equality1}
    \mathrm{Re}\left\{u_1(x,z_0) \overline{u_1(x,z)}\right\}
    =\mathrm{Re}\left\{u_2(x,z_0) \overline{u_2(x,z)}\right\},
    \end{equation}
    where the overline denotes the complex conjugate. According to \eqref{obstacle_condition1} and \eqref{obstacle_condition2}, we denote
    \begin{equation*}
        u_j(x,z_0)=r(x,z_0) \mathrm{e}^{\mathrm{i} \alpha_j(x,z_0)},\quad
        u_j(x,z)=s(x,z) \mathrm{e}^{\mathrm{i} \beta_j(x,z)},\quad j=1,2,
    \end{equation*}
    where $r(x,z_0)=|u_j(x,z_0)|$, $s(x,z)=|u_j(x,z)|$, $\alpha_j(x,z_0)$ and $\beta_j(x,z)$, are real-valued functions,  $j=1,2$.

    Since $\Sigma$ is an admissible surface of $G$, by definition 2.1 and the analyticity of $u_j(x,z)$ with respect to $x$, we have $s(x,z)\not\equiv 0$
    for $x \in \Sigma, z \in\Gamma$. Further, the continuity yields that there exists open sets $\tilde{\Sigma}\subset \Sigma$ and $\Gamma_0\subset\Gamma$
    such that $s(x,z)\neq 0$, $\forall (x,z)\in \tilde{\Sigma}\times\Gamma_0$. Similarly, we have $r(x,z_0)\not\equiv 0$ on $\tilde{\Sigma}$. Again,
    the continuity leads to $r(x,z_0)\neq 0$ on an open set $\Sigma_0 \subset \tilde{\Sigma}$. Therefore, we have $r(x,z_0)\neq 0$, $s(x,z)\neq 0,\ \forall (x, z) \in \Sigma_0\times\Gamma_0$. In addition, taking \eqref{Thm1equality1} into account, we derive that
    \begin{equation*}
        \cos[\alpha_1(x,z_0)-\beta_{1}(x,z)]=\cos[\alpha_2(x,z_0)-\beta_{2}(x,z)], \quad \forall (x, z) \in \Sigma_0\times\Gamma_0.
    \end{equation*}
    Hence, either
    \begin{eqnarray}\label{Thm1equality2}
    \alpha_1(x,z_0)-\alpha_2(x,z_0)=\beta_{1}(x,z)-\beta_{2}(x,z)+ 2m\pi, \quad \forall (x, z) \in \Sigma_0\times\Gamma_0
    \end{eqnarray}
    or
    \begin{eqnarray}\label{Thm1equality3}
    \alpha_1(x,z_0)+\alpha_2(x,z_0)=\beta_1(x,z)+\beta_{2}(x,z)+ 2m\pi, \quad \forall (x, z) \in \Sigma_0\times\Gamma_0
    \end{eqnarray}
    holds with some $m \in \mathbb{Z}$.

    First, we shall consider the case \eqref{Thm1equality2}. Since $z_0$ is fixed, let us define $\gamma(x):=\alpha_1(x,z_0)-\alpha_2(x,z_0)- 2m\pi$
    for $ x \in \Sigma_0$, and then, we deduce for all $ (x, z) \in \Sigma_0\times\Gamma_0$
    \begin{equation*}
        u_1(x,z)=s(x,z)\mathrm{e}^{\mathrm{i} \beta_{1}(x,z)}
        =s(x,z)\mathrm{e}^{\mathrm{i} \beta_{2}(x,z)+\mathrm{i} \gamma(x)}=u_2(x,z)\mathrm{e}^{\mathrm{i} \gamma(x)}.
    \end{equation*}
    From the reciprocity relation \cite[Theorem 2.1]{Qin} for point sources, we have
    \begin{equation*}
        u_1(z,x)= \mathrm{e}^{\mathrm{i} \gamma(x)}u_2(z,x), \quad \forall (x, z) \in \Sigma_0\times\Gamma_0.
    \end{equation*}
Then, for every $x\in \Sigma_0$, by using the analyticity of $u_j(z,x)$($j=1,2$) with respect to $z$, we have $u_1(z,x)= \mathrm{e}^{\mathrm{i} \gamma(x)}u_2(z,x),$   $\forall z\in \partial\Omega$.
Let $w(z,x)=u_1(z,x)-\mathrm{e}^{\mathrm{i} \gamma(x)}  u_2(z,x)$ and $D_0=D_1\cap D_2$, then
$$
\begin{cases}
\Delta w+ k^2 w=0 &\mathrm{in}\ \Omega, \\
w=0  &\mathrm{on}\ \partial \Omega.
\end{cases}
$$

By the assumption of $\Omega$ that $k^2$ is not a Dirichlet eigenvalue of $-\Delta$ in $\Omega$,  we find $w=0$ in $\Omega$. Now, the analyticity of
$u_j(z,x)(j=1,2)$ with respect to $z$ yields
    \begin{equation*}
        u_1(z,x)= \mathrm{e}^{\mathrm{i} \gamma(x)}u_2(z,x),\quad \forall z \in D_0\backslash (B\cup \{x\}).
    \end{equation*}
i.e., for all $ z \in D_0\backslash (B\cup \{x\})$,
\begin{equation}\label{eq:relation}
        u^s_1(z,x)+\Phi(z,x)=\mathrm{e}^{\mathrm{i} \gamma(x)} \left[ u^s_2(z,x)+\Phi(z,x)\right].
\end{equation}
By the Green's formula \cite[Theorem 2.5]{Colton}, one can readily deduce that $u^s_j(z,x)$ is bounded for $z\in D_j\backslash B$ $(j=1,2)$,
which, together with \eqref{eq:relation}, implies that $(1-\mathrm{e}^{\mathrm{i} \gamma(x)})\Phi(z,x)$ is bounded for $z \in D_0\backslash (B\cup \{x\})$. Hence, by letting $z\rightarrow x$, we obtain $\mathrm{e}^{\mathrm{i} \gamma(x)} = 1$, and again the reciprocity relation \cite[Theorem 2.1]{Qin} leads to
\begin{equation*}
    u_{1}^s(x,z) =  u_{2}^s(x,z),\quad   \forall (x, z) \in \Sigma_0\times \partial\Omega.
\end{equation*}
By a similar discussion of \eqref{eq:relation} for $u_1^s(x,z)- u_2^s(x,z)$ on $G$, we have
\begin{equation}\label{coincide}
    u_{1}^s(x,z) =  u_{2}^s(x,z),\quad   \forall (x, z) \in (D_0\backslash B)\times \partial\Omega.
\end{equation}

Next we are going to show that the case \eqref{Thm1equality3} does not hold. Suppose that \eqref{Thm1equality3} is true, then following a similar argument, we see that for every $x\in \Sigma_0$, there exists $\eta(x)$ such that $u_1(z,x)=\mathrm{e}^{\mathrm{i} \eta(x)} \overline{u_2(z,x)}$ for all $z \in D_0\backslash (B\cup \{x\})$, i.e.
$$
      u_1^s(z,x)+\Phi(z,x)=\mathrm{e}^{\mathrm{i} \eta(x)}  \overline{[u_2^s(z,x) +\Phi(z,x)]}.
$$
Then, from the boundedness of $u_{j}^s(z,x)$, it can be seen that $\Phi(z,x)-\mathrm{e}^{\mathrm{i} \eta(x)} \overline{\Phi(z,x)}$ is bounded for all $z \in D_0\backslash (B\cup \{x\})$. Since
$$
	\Phi(z,x)-\mathrm{e}^{\mathrm{i} \eta(x)} \overline{\Phi(z,x)}
	=  \left[1-\mathrm{e}^{\mathrm{i} \eta(x)}\right]\frac{\cos (k|z-x|)}{4\pi|z-x|}\\[2mm]
	+\mathrm{i}\left[1+\mathrm{e}^{\mathrm{i} \eta(x)}\right]\frac{\sin (k|z-x|) }{4\pi|z-x|},
$$
by letting $z\to x$, we deduce $\mathrm{e}^{\mathrm{i} \eta(x)}=1$, and thus, $u_1(z,x)= \overline{u_2(z,x)}$ for $z
 \in D_0\backslash (B\cup \{x\})$. Further, by using the  impedance  boundary condition  $\frac{\partial u_j(z,x)}{\partial \nu}+ \mathrm{i}\lambda_0  u_j(z,x)=0, z\in \partial B (j=1,2)$, we have
$$
\dfrac{\partial \overline{u_2(z,x)}}{\partial \nu}+ \mathrm{i}\lambda_0  \overline{u_2(z,x)}
=\dfrac{\partial u_1(z,x)}{\partial \nu}+ \mathrm{i}\lambda_0  u_1(z,x)=0, \quad z\in\ \partial B,
$$
which yields
$$
\dfrac{\partial u_2(z,x)}{\partial \nu}- \mathrm{i}\lambda_0  u_2(z,x)=0, \quad z\in\ \partial B.
$$
This is a contradiction. Hence, the case \eqref{Thm1equality3} does not hold.

Having verified \eqref{coincide}, we complete our proof as the consequences of two existing uniqueness results,  Theorem 2.1 in \cite{Qin1} and Theorem 3.1 in \cite{Qin2}.
\end{proof}

\begin{remark}
    We would like to point out that an analogous uniqueness result in two dimensions remains valid after appropriate modifications of the fundamental solution
    and the admissible surface. So we omit the 2D details.
\end{remark}

%%%%%%%%%%%%%%%%%%%%%%%%%%%%%%%%%%%%%%%%%%%%%%%%%%%%%%%%%%%%%%%%%%%%%%%%%%%%%%%%%%%%%%%%%%%
%\section{Conclusion}\label{sec:conclusion}
%%%%%%%%%%%%%%%%%%%%%%%%%%%%%%%%%%%%%%%%%%%%%%%%%%%%%%%%%%%%%%%%%%%%%%%%%%%%%%%%%%%%%%%%%%%

%In this paper, we established some uniqueness results on inverse scattering problems for the Helmholtz equation with phaseless far-field data co-produced by
%the obstacle (resp. medium) and the point sources. We show that the superposition of incident point sources could override the necessity of incorporating a reference ball in the previous %works on establishing the uniqueness. We proved that the obstacle with its boundary condition or the refractive index can be uniquely determined
%by the modulus of far-field patterns.

%Based on the ideas in this paper, our future work consist in the uniqueness results of the phaseless inverse elastic scattering problems, as well as the development of effective inversion algorithms with phaseless data.

%%%%%%%%%%%%%%%%%%%%%%%%%%%%%%%%%%%%%%%%%%%%
\section*{Acknowledgements}
%%%%%%%%%%%%%%%%%%%%%%%%%%%%%%%%%%%%%%%%%%%%

D. Zhang and Y. Wang were supported by NSF of China under the grant 11671170. Y. Guo was supported by NSF of China under the grant 11601107, 41474102 and 11671111. The work of J.~Li was partially supported by the NSF of China under the grant No.~11571161 and the Shenzhen Sci-Tech Fund No.~JCYJ20170818153840322.

%The authors would like to thank the referees for their careful
%reading and valuable comments which improved the quality of our
%submitted manuscript

%The authors would also like to thank the anonymous referees for their valuable comments and suggestions, which have led to great improvements on our manuscript.

%%%%%%%%%%%%%%%%%%%%%%%%%%%%%%%%%%%%%%%%%%%%%%%%%%%%%%%%%%%%%%%%%%%%%%%%%%%%%%%%%%%%%%%%%%%
%section*{References}

\end{document}